\newcommand{\R}[0]{\mathbb R}
\newcommand{\Ds}[0]{\mathcal D}
\newtheorem{Th}{Theorem}[section]
\newtheorem{Lemma}{Lemma}[section]
\newtheorem{Rem}{Remark}[section]
\begin{document}

\title{On the local well-posedness of the 1D Green-Naghdi system over a nonflat bottom}
\author{H. Inci}

\maketitle

\begin{abstract}
	In this paper we consider the 1D Green-Naghdi system over a nonflat bottom. This system describes the evolution of water waves over an uneven bottom in the shallow water regime in terms of the water depth $h$ and the horizontal velocity $u$. Using a Lagrangian formulation of this system on a Sobolev type diffeomorphism group we prove local well-posedness for $(h,u)$ in the Sobolev space $(1+H^s(\R)) \times H^{s+1}(\R),\; s > 1/2$. This improves the local well-posedness range.
\end{abstract}

\section{Introduction}\label{section_introduction}

We consider for $t \geq 0,\; x \in \R$, the 1D Green-Naghdi system in the setting of a nonflat bottom as given in \cite{seabra}
\begin{align}
	\nonumber
	&h_t+(hu)_x=0,\\
	\nonumber
	&h(u_t+uu_x)+[\frac{1}{2}h^2g+\frac{1}{2}h^2 \xi_x(u_t+uu_x)+\frac{1}{2}h^2 u^2 \xi_{xx} + \frac{1}{3} h^3 (u_x^2-uu_{xx}-u_{tx})]_x\\
	\label{gn}
	&=-h\xi_x g-h\xi_x^2 (u_t+uu_x)-hu^2 \xi_x \xi_{xx}-\frac{1}{2} h^2 \xi_x (u_x^2-uu_x-u_{tx}),\\
	\nonumber
	&h(t=0)=h_0,\;u(t=0)=u_0,
\end{align}
where $h(t,x) \in \R$ is the water depth, $u(t,x) \in \R$ the horizontal velocity of the water wave, $\xi(x) \in \R$ the shape of the bottom and $g$ the gravitational constant. The Green-Naghdi system \eqref{gn} models 1D wave propagation over an uneven bottom in the shallow water regime, i.e when the typical wavelength is much larger than the water depth.\\ 
A 2D version of \eqref{gn} was studied by Green and Naghdi in \cite{gn}, a flat bottom version of \eqref{gn}, i.e. the case $\xi \equiv 0$, appears in \cite{serre} and \cite{gardner}. As mentioned above \eqref{gn} is taken from \cite{seabra}.\\ \\
Let us define for $h(t,x)$ and $\xi(x)$ the operator
\[
	A_{h,\xi}:u \mapsto h(1+\xi_x^2) u + [\frac{1}{2} h^2 \xi_x u]_x- \frac{1}{2} h^2 \xi_x u_x-[\frac{1}{3} h^3 u_x]_x.
\]
With this we have
\begin{align*}
	&A_{h,\xi}(u_t+uu_x)=h(1+\xi_x^2) (u_t+uu_x)+[\frac{1}{2} h^2 \xi_x (u_t+uu_x)]_x\\
	&-\frac{1}{2} h^2 \xi_x (u_{tx}+u_x^2+uu_{xx})-[\frac{1}{3} h^3 (u_t+uu_{xx}+u_x^2)]_x.
\end{align*}
Thus using the second equation in \eqref{gn} we can write
\begin{align*}
	A_{h,\xi}(u_t+uu_x)=-hh_xg-[\frac{1}{2}h^2 u^2 \xi_{xx}]_x-[\frac{2}{3} h^3 u_x^2]_x-h \xi_x g -hu^2 \xi_x \xi_{xx}- h^2 \xi_x u_x^2.
\end{align*}
Supposing that $A_{h,\xi}$ is invertible the second equation in \eqref{gn} is equivalent to
\begin{align}
	\nonumber
	u_t+uu_x&=-A_{h,\xi}^{-1}\left(hh_xg+[\frac{1}{2}h^2 u^2 \xi_{xx}]_x+[\frac{2}{3} h^3 u_x^2]_x+h \xi_x g +hu^2 \xi_x \xi_{xx}+ h^2 \xi_x u_x^2\right)\\
	\label{nonlocal}
	&=-A_{h,\xi}^{-1} P(h,u,\xi).
\end{align}
We will see in the next section that $A_{h,\xi}$ is invertible under suitable assumptions on $h$. The local well-posedness result for \eqref{gn} we want to prove reads as
\begin{Th}\label{th_lwp}
	Let $s > 1/2$, $\xi \in H^{s+3}(\R)$ and $(h_0,u_0) \in (1+H^s(\R)) \times H^{s+1}(\R)$ with $\inf_{x \in \R} h_0(x) > 0$. Then there is $T > 0$ and a unique solution $(h,u)$ to \eqref{gn} on $[0,T]$ of class
	\[
		(h,u) \in C([0,T];(1+H^s(\R)) \times H^{s+1}(\R)) \cap C^1([0,T];(1+H^{s-1}(\R)) \times H^s(\R)),
	\]
with $\inf_{x \in \R} h(t,x) > 0,\; 0 \leq t \leq T$. Moreover the solution depends continuously on the data $(h_0,u_0,\xi)$.
\end{Th}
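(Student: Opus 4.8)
The plan is to recast the Eulerian system \eqref{gn} as an ordinary differential equation on (the tangent bundle of) a Sobolev diffeomorphism group, where the apparent loss of derivatives in the transport nonlinearity $uu_x$ disappears, and then to invoke the Picard--Lindel\"of theorem together with its smooth-dependence statement. Concretely, I would work on the group $\Ds^{s+1}$ of diffeomorphisms $\varphi=\id+f$ with $f\in H^{s+1}(\R)$ and $\inf_x\varphi_x>0$, which for $s>1/2$ is a smooth Banach manifold and a topological group under composition. Let $\varphi(t)$ be the flow of $u$, i.e.\ $\dot\varphi=u\circ\varphi$ with $\varphi(0)=\id$, and set $v:=\dot\varphi=u\circ\varphi\in H^{s+1}$. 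The continuity equation $h_t+(hu)_x=0$ is equivalent to the transport identity $\frac{d}{dt}\big((h\circ\varphi)\,\varphi_x\big)=0$, whence $h\circ\varphi=h_0/\varphi_x$; thus $h$ is determined \emph{algebraically} by $\varphi$ and the datum $h_0$, and the constraint $\inf_x h>0$ is automatically preserved as long as $\inf_x\varphi_x>0$.

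Since $(u_t+uu_x)\circ\varphi=\ddot\varphi$, equation \eqref{nonlocal} then becomes the second-order ODE
\[
  \ddot\varphi=-\big(A_{h,\xi}^{-1}P(h,u,\xi)\big)\circ\varphi=:F(\varphi,v),
\]
which I would rewrite as the first-order system $(\dot\varphi,\dot v)=(v,F(\varphi,v))$ on an open subset of $\Ds^{s+1}\times H^{s+1}$. The entire right-hand side is to be expressed through $(\varphi,v)$ via $u=v\circ\varphi^{-1}$, $h\circ\varphi=h_0/\varphi_x$ and $\xi\circ\varphi$, with the Eulerian derivative $\partial_x$ pulled back to $\varphi_x^{-1}\partial_x$. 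Crucially, rather than composing the low-regularity object $P\in H^{s-1}$ with $\varphi$ directly, one forms $P\circ\varphi$ out of the smooth ingredients $v$, $h_0/\varphi_x$, $\xi\circ\varphi$ and finitely many pulled-back derivatives, so that only multiplications and differentiations of $H^{s+1}$-regular functions occur. The whole argument then hinges on proving that $F$ is smooth (in particular $C^1$ and locally Lipschitz) between these Banach spaces.

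This smoothness is the technical heart of the proof, and I expect it to be the main obstacle; I would establish it in two parts. First, using the Ebin--Marsden type calculus on $\Ds^{s+1}$ --- right translation $R_\varphi:w\mapsto w\circ\varphi$ is continuous, and the composed expressions built above depend smoothly on $(\varphi,v)$ provided the fixed bottom profile $\xi$ is regular enough. This is exactly where the hypothesis $\xi\in H^{s+3}(\R)$ enters: the operator $A_{h,\xi}$ and the source $P$ involve $\xi_x$ and $\xi_{xx}$, and composing $\xi_{xx}\in H^{s+1}$ with $\varphi\in\Ds^{s+1}$ must land back in $H^{s+1}$. Second, and more delicately, there is the nonlocal factor $A_{h,\xi}^{-1}$. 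I would conjugate $A_{h,\xi}$ by $\varphi$, obtaining $\widetilde A_\varphi:=R_\varphi\circ A_{h,\xi}\circ R_\varphi^{-1}$ --- a second-order elliptic operator $H^{s+1}\to H^{s-1}$ whose coefficients (namely $h_0/\varphi_x$, $\xi_x\circ\varphi$ and $\varphi_x^{-1}$) are smooth functions of $(\varphi,v)$. Using the invertibility of $A_{h,\xi}$ promised in the next section, $\widetilde A_\varphi$ is boundedly invertible with $(\widetilde A_\varphi)^{-1}=R_\varphi\circ A_{h,\xi}^{-1}\circ R_\varphi^{-1}$, and since inversion is smooth on the open set of invertible operators, $\varphi\mapsto(\widetilde A_\varphi)^{-1}$ is smooth. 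The decisive point is the bookkeeping of derivatives: $P\circ\varphi$ costs two derivatives relative to $(\varphi,v)$, while $(\widetilde A_\varphi)^{-1}$ gains them back, so that $F=-(\widetilde A_\varphi)^{-1}(P\circ\varphi)$ genuinely maps into $H^{s+1}$ with no net loss.

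Granting the smoothness of $F$, the Picard--Lindel\"of theorem on Banach spaces yields, for the data $(\varphi(0),v(0))=(\id,u_0)$, a unique solution $(\varphi,v)\in C^1([0,T];\Ds^{s+1}\times H^{s+1})$ depending smoothly on the initial point and on the parameter $\xi$, hence continuously on $(h_0,u_0,\xi)$. Passing back to Eulerian variables through $u=v\circ\varphi^{-1}$ and $h=(h_0/\varphi_x)\circ\varphi^{-1}$ produces a solution of \eqref{gn} with $\inf_x h(t,\cdot)>0$. The asymmetry in the regularity assertion --- continuity into $(1+H^s(\R))\times H^{s+1}(\R)$ but only $C^1$ into $(1+H^{s-1}(\R))\times H^s(\R)$ --- is the standard artifact of this method: the curve $t\mapsto(\varphi,v)$ is $C^1$ into the top space, but $R_\varphi$ and inversion $\varphi\mapsto\varphi^{-1}$ are continuous and not $C^1$ into that space, so differentiating $u=v\circ\varphi^{-1}$ in $t$ costs one derivative. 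Finally, uniqueness and continuous dependence for $(h,u)$ transfer from the corresponding Lagrangian statements through this correspondence, completing the proof.
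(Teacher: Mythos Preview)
Your proposal is correct and follows essentially the same route as the paper: Lagrangian reformulation on $\Ds^{s+1}(\R)$, conjugation $\widetilde A_\varphi=R_\varphi A_{h,\xi} R_\varphi^{-1}$ to obtain a $C^1$ operator-valued map whose inverse is smooth by Neumann series, regularity of the resulting vector field $F$, Picard--Lindel\"of, and transfer back to Eulerian variables via the continuity of composition and inversion (with the one-derivative loss in the $C^1$ statement arising exactly as you describe). The only nuance is that the paper obtains $F$ merely $C^1$ rather than fully smooth --- the $\xi$-dependence enters through $(\varphi,\xi)\mapsto\xi_x\circ\varphi$ and $(\varphi,\xi)\mapsto\xi_{xx}\circ\varphi$, which are $C^1$ into $H^s$ but not analytic at the top regularity --- though this is all that Picard--Lindel\"of and continuous dependence require, so your argument goes through unchanged.
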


\begin{Rem}
	In the statement of Theorem \ref{th_lwp} we could take $\xi \in C_b^{\lceil s \rceil+3}(\R)$ as well for the bottom. 
\end{Rem}

In \cite{israwi} it was proved that \eqref{gn} is locally well-posed for $(h,u) \in (1+H^s(\R)) \times H^{s+1}(\R),\; s > 3/2$. So Theorem \ref{th_lwp} improves the local well-posedness range. Recently in \cite{gn_lagrangian} it was shown that the 1D Green-Naghdi system over a flat bottom is locally well-posed for $(h,u) \in (1+H^s(\R)) \times H^s(\R),\; s > 1/2$. This corresponds to $\xi \equiv 0$ in Theorem \ref{th_lwp}.\\
To prove Theorem \ref{th_lwp} we will use the same approach as in \cite{gn_lagrangian}. We will write \eqref{gn} in Lagrangian variables, i.e. in terms the flow map $\varphi$ of $u$. The flow map $\varphi$ is defined by the ODE
\[
	\varphi_t(t,x)=u(t,\varphi(t,x)),\;t \geq 0,\;\varphi(0,x)=x,\; x \in \R.
\]
Note that this defines for each $t$ a diffeomorphism $\varphi(t):=\varphi(t,\cdot):\R \to \R$. We can translate the first equation in \eqref{gn} into Lagrangian coordinates by considering 
\begin{align*}
	\frac{d}{dt} \left(\varphi_x \cdot h \circ \varphi\right)=\varphi_x \cdot (u_x \circ h) \circ \varphi+\varphi_x \cdot (h_t+u h_x) \circ \varphi = 0,
\end{align*}
where the last equality is a consequence of $h_t+(hu)_x=0$. Thus $h$ expressed in Lagrangian variables is
\[
	h=\left(\frac{h_0}{\varphi_x}\right) \circ \varphi^{-1}.
\]
To express the second equation of \eqref{gn}, i.e. \eqref{nonlocal}, in Lagrangian variables consider
\begin{align}
	\nonumber
	&\varphi_{tt}=\frac{d}{dt} u \circ \varphi = (u_t+uu_x) \circ \varphi\\
	\label{F}
	&=-\left(A_{(h_0/\varphi_x)\circ \varphi^{-1},\xi} P\left((h_0/\varphi_x) \circ \varphi^{-1},\varphi_t \circ \varphi^{-1},\xi \right) \right) \circ \varphi=F(\varphi,\varphi_t,h_0,\xi),
\end{align}
where we used the equation for $u_t+uu_x$ in \eqref{nonlocal}, $h=(h_0/\varphi_x) \circ \varphi^{-1}$ and $u=\varphi_t \circ \varphi^{-1}$. Our strategy will be to prove that $F$ is continuously differentiable on a suitable functional space and hence by solving the second order ODE $\varphi_{tt}=F(\varphi,\varphi_t,h_0,\xi)$ we will get local well-posedness of \eqref{gn}.

\section{The operator $A_{h,\xi}$}\label{section_ah}

The goal of this section is to prove that $A_{h,\xi}$ is invertible.

\begin{Lemma}
	Let $s > 1/2$, $\xi \in H^{s+3}(\R)$ and $h \in 1+H^s(\R)$ with $\inf_{x \in \R} h(x) > 0$. Then 
\[
	A_{h,\xi}:H^{s+1}(\R) \to H^{s-1}(\R),\;u \mapsto h(1+\xi_x^2) u + [\frac{1}{2} h^2 \xi_x u]_x- \frac{1}{2} h^2 \xi_x u_x-[\frac{1}{3} h^3 u_x]_x,
\]
is invertible.
\end{Lemma}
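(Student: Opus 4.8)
The plan is to exploit the symmetric, divergence-form structure of $A_{h,\xi}$: first establish invertibility at the energy level $H^1(\R) \to H^{-1}(\R)$ via Lax--Milgram, then lift the regularity to obtain the stated isomorphism $H^{s+1}(\R) \to H^{s-1}(\R)$ by an elliptic bootstrap. The first step is to pass to the weak formulation. Integrating the two middle terms of $A_{h,\xi}$ by parts against a test function shows that $A_{h,\xi}$ is the operator associated with the symmetric bilinear form
\[
	B(u,v) = \int_\R \Big( h(1+\xi_x^2)\, u v - \tfrac12 h^2 \xi_x (u v_x + u_x v) + \tfrac13 h^3\, u_x v_x \Big)\, dx .
\]
Since $s > 1/2$ gives $H^s(\R) \hookrightarrow L^\infty(\R)$ and $\xi \in H^{s+3}(\R)$ gives $\xi_x \in L^\infty(\R)$, the coefficients $h$, $h^2\xi_x$, $h^3$ are all bounded, with $\tfrac13 h^3 \geq \tfrac13 (\inf_x h)^3 > 0$; hence $B$ is bounded, $|B(u,v)| \leq C\, \hnorm{u}{H^1}\hnorm{v}{H^1}$.

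The coercivity is the analytic core. Pointwise the integrand of $B(u,u)$ is the quadratic form in $(u,u_x)$ with matrix
\[
	M = \begin{pmatrix} h(1+\xi_x^2) & -\tfrac12 h^2 \xi_x \\ -\tfrac12 h^2 \xi_x & \tfrac13 h^3 \end{pmatrix}, \qquad \det M = h^4\big(\tfrac13 + \tfrac1{12}\xi_x^2\big) \geq \tfrac13 (\inf_x h)^4 > 0 .
\]
Combined with the uniform upper bounds on the entries, this forces a uniform lower bound $M \succeq c_0 I$, so that $B(u,u) \geq c_0 \hnorm{u}{H^1}^2$. By Lax--Milgram, $A_{h,\xi} : H^1(\R) \to H^{-1}(\R)$ is an isomorphism.

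It remains to upgrade to the stated range. Boundedness $A_{h,\xi}: H^{s+1}(\R) \to H^{s-1}(\R)$ follows from the Sobolev product estimates (for $s>1/2$ the space $H^s(\R)$ is an algebra, so $h^3 \in 1+H^s$, $h^2\xi_x \in H^s$, $(h^2\xi_x)_x \in H^{s-1}$, and so on). Injectivity on $H^{s+1} \subset H^1$ is inherited from the $H^1$ statement, so only surjectivity is at issue: given $f \in H^{s-1} \subset H^{-1}$, I would take the $H^1$-solution $u$ of $A_{h,\xi}u = f$ and show $u \in H^{s+1}$. Writing the equation as $-\tfrac13(h^3 u_x)_x = f - \big[h(1+\xi_x^2)+\tfrac12(h^2\xi_x)_x\big]u$, I would apply $\Lambda^{s-1} := (1-\p_x^2)^{(s-1)/2}$, commute it past the coefficients, and test against $\Lambda^{s-1}u$, using the coercivity to absorb the top-order term while the commutator and product terms are controlled by Kato--Ponce type estimates. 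This promotes $u$ from $H^1$ to $H^{s+1}$ with a quantitative bound, and a bounded inverse then follows from the open mapping theorem.

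The main obstacle is precisely this final bootstrap at low regularity: since $s$ may be smaller than $1$, the zeroth-order coefficient contains the genuinely distributional piece $(h^2\xi_x)_x \in H^{s-1}(\R)$ of negative order, so the product and commutator estimates must be performed on the full Sobolev scale (negative indices included) rather than by naive pointwise multiplication, and one must verify that the resulting lower-order contributions are truly subordinate to the coercive leading term.
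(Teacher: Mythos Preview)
Your Lax--Milgram step is essentially the paper's: the bilinear form and the upper bound coincide, and your coercivity via the pointwise $2\times 2$ determinant is equivalent to the paper's use of Young's inequality $|h^2\xi_x u u_x|\le h\xi_x^2 u^2+\tfrac14 h^3 u_x^2$, which gives the cleaner lower bound $B(u,u)\ge\int_\R (hu^2+\tfrac1{12}h^3 u_x^2)\,dx$.

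The bootstrap is where you diverge, and the paper's route is considerably more elementary than the Kato--Ponce commutator scheme you outline. Rather than hit the equation with $\Lambda^{s-1}$, the paper simply isolates the top-order term,
\[
[\tfrac13 h^3 u_x]_x \;=\; h(1+\xi_x^2)u + [\tfrac12 h^2\xi_x u]_x - \tfrac12 h^2\xi_x u_x - f,
\]
and observes that, for $u\in H^1$ and $\tfrac12<s\le 1$, each term on the right lies in $H^{s-1}$ by direct Sobolev products (the second term is kept in divergence form, so one only ever multiplies nonnegative-order quantities and then differentiates). Hence $h^3 u_x\in H^s$; since $1/h^3$ is a bounded multiplier on $H^s$ (as $h\in 1+H^s$ with $\inf h>0$), one gets $u_x\in H^s$, i.e.\ $u\in H^{s+1}$. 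For $s>1$ the same identity is iterated in unit steps. This completely sidesteps the obstacle you flag: by not expanding $[\tfrac12 h^2\xi_x u]_x$ via the product rule, the paper never has to multiply $u$ by the distributional coefficient $(h^2\xi_x)_x\in H^{s-1}$, and no negative-index product or commutator estimates are needed. Your approach is plausible but heavier and, as written, incomplete at exactly the point you identify; the paper's algebraic bootstrap closes the argument with no extra machinery.
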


\begin{proof}
Consider the symmetric bilinear form on $H^1(\R)$ given by
	\[
		\langle u,v \rangle_{h,\xi} = \int_{\R} h(1+\xi_x^2) u v -\frac{1}{2} h^2 \xi_x u v_x - \frac{1}{2} h^2 \xi_x u_x v + \frac{1}{3} h^3 u_x v_x \;dx.
	\]
	We want to show that $\langle \cdot,\cdot \rangle_{h,\xi}$ is equivalent to the $H^1$ inner product
	\[
		\langle u,v \rangle_{H^1} = \int_\R uv + u_x v_x \;dx,\; u,v \in H^1(\R).
	\]
By the Sobolev imbedding $H^s(\R)$ imbeds into $C_0(\R)$, the continuous functions vanishing at infinity. Thus
	\[
		|\langle u,u\rangle_{h,\xi}| \leq C \langle u,u \rangle_{H^1}
	\]
	for some $C > 0$. For the estimate from below consider
	\[
		\langle u,u \rangle_{h,\xi}=\int_{\R} h(1+\xi_x^2) u^2-h^2 \xi_x u u_x +\frac{1}{3} h^3 u_x^2 \;dx.
	\]
We have
	\[
		|h^2 \xi_x u u_x| = (|h|^{1/2} |\xi_x| |u|) (|h|^{3/2} |u_x|) \leq h \xi_x^2 u^2 + \frac{1}{4} h^3 u_x^2,
	\]
where in the last step we used $ab \leq \frac{\varepsilon}{2} a^2 + \frac{1}{2\varepsilon} b^2$ with $\varepsilon=2$. Thus
	\[
		\langle u,u \rangle_{h,\xi} \geq \int_\R h u^2 + \frac{1}{12} h^3 u_x^2 \;dx.
	\]
Since $\inf_{x \in \R} h(x) > 0$ we get
	\[
		\langle u,u \rangle_{h,\xi} \geq C \langle u,u \rangle_{H^1}
	\]
	for some $C > 0$. Let $f \in H^{s-1}(\R)$. Our goal is to show that
\[
	A_{h,\xi}(u)=f
\]
	has a unique solution $u \in H^{s+1}(\R)$. We assume first $1/2 < s \leq 1$. Since $f \in H^{-1}(\R)$ we get by the Riesz Representation Theorem a unique $u \in H^1(\R)$ satisfying
	\[
		\langle u,\phi \rangle_{h,\xi} = \langle f,\phi \rangle,\;\forall \phi \in C_c^\infty(\R),
	\]
where $\langle f,\cdot \rangle$ is the duality pairing. In other words we have 
	\[
h(1+\xi_x^2) u + [\frac{1}{2} h^2 \xi_x u]_x- \frac{1}{2} h^2 \xi_x u_x-[\frac{1}{3} h^3 u_x]_x=f
	\]
in $H^{-1}(\R)$. Rewriting gives
	\[
		[\frac{1}{3} h^3 u_x]_x=h(1+\xi_x^2) u + [\frac{1}{2} h^2 \xi_x u]_x- \frac{1}{2} h^2 \xi_x u_x-f \in H^{s-1}(\R).
	\]
	Thus $h^3 u_x \in H^s(\R)$. From \cite{composition} we know that dividing by $h^3$ is a bounded linear map on $H^s(\R)$. So $u_x \in H^s(\R)$ and with that we conclude $u \in H^{s+1}(\R)$ and that $u$ satisfies $A_{h,\xi}(u)=f$ in $H^{s-1}(\R)$. Suppose now $1 < s \leq 2$. From the previous step (case $s=1$) we get $u \in H^2(\R)$ satisfying $A_{h,\xi}(u)=f$ in $L^2$. Thus we have
\[
		[\frac{1}{3} h^3 u_x]_x=h(1+\xi_x^2) u + [\frac{1}{2} h^2 \xi_x u]_x- \frac{1}{2} h^2 \xi_x u_x-f \in H^{s-1}(\R).
\]
	Arguing as above we get $u \in H^{s+1}(\R)$ satisfying $A_{h,\xi}(u)=f$ in $H^{s-1}(\R)$. By continuing this bootstrap argument for $2 < s \leq 3,\;3 < s \leq 4, \ldots$, we get that
	\[
		A_{h,\xi}:H^{s+1}(\R) \to H^{s-1}(\R)
	\]
is an isomorphism for $s > 1/2$.
\end{proof}

\section{Lagrangian formulation}\label{section_lagrangian}

The goal of this section is to write \eqref{gn} in Lagrangian variables. Let us start with some basic results about Sobolev spaces -- see \cite{composition} for the proofs. Let $s > 1/2$. Then multiplication
\[
	H^\sigma(\R) \times H^s(\R) \to H^\sigma(\R),\;(f,g) \mapsto f \cdot g,\quad 0\leq \sigma \leq s,
\]
is continuous. For $1/2 < s < 1$ this extends to the range $s-1 \leq \sigma < 0$ -- see \cite{gn_lagrangian}. By the Sobolev Imbedding Theorem we have
\[
	H^s(\R) \hookrightarrow C_0(\R),\;H^{s+1}(\R) \hookrightarrow C_0^1(\R).
\]
In \cite{composition} the authors studied the functional space
\[
	\Ds^{s+1}(\R)=\{\varphi:\R \to \R \;|\; \varphi-\text{id} \in H^{s+1}(\R),\;\varphi_x(x) > 0 \; \forall x \in \R \},
\]
where $\text{id}:\R \to \R,\;x \mapsto x$, is the identity map on $\R$. By the Sobolev Imbedding Theorem $\Ds^{s+1}(\R)$ consists of $C^1$ diffeomorphisms of $\R$ and $\Ds^{s+1}(\R)-\text{id}$ is an open subset of $H^{s+1}(\R)$. So as an open subset of $H^{s+1}(\R)$ it has naturally a differential structure. In \cite{composition} it was shown that the composition map
\begin{align}\label{composition}
	H^\sigma(\R) \times \Ds^{s+1}(\R) \to H^\sigma(\R),\;(f,\varphi) \mapsto f \circ \varphi,\quad 0 \leq \sigma \leq s+1,
\end{align}
is continuous. Furthermore we have that
\begin{align}\label{C1}
	H^{s+1}(\R) \times \Ds^{s+1}(\R) \to H^s(\R),\;(f,\varphi) \mapsto f \circ \varphi,
\end{align}
is a $C^1$ map. In \cite{composition} it was also shown that
\begin{align}\label{inversion}
	\Ds^{s+1}(\R) \to \Ds^{s+1}(\R),\;\varphi \mapsto \varphi^{-1},
\end{align}
is continuous. In particular we get that $(\Ds^{s+1}(\R),\circ)$ is a topological group. If we denote the composition from the right with $\varphi$ by $R_\varphi$, i.e. $R_\varphi:f \mapsto f \circ \varphi$, we get from the above that
\[
	R_\varphi:H^\sigma(\R) \to H^\sigma(\R),\;f \mapsto f \circ \varphi,\quad 0 \leq \sigma \leq s+1,
\]
is a continuous linear map. For $1/2 < s < 1$ composition from the right $R_\varphi:H^\sigma(\R) \to H^\sigma(\R),\;s-1 \leq \sigma < 0$, is a well defined continuous linear map -- see \cite{gn_lagrangian}. Note that $R_\varphi$ is an isomorphism with inverse $R_\varphi^{-1}=R_{\varphi^{-1}}$.\\
From the above we get that for $\varphi \in \Ds^{s+1}(\R)$ multiplication with $\varphi_x$
\[
	M_{\varphi_x}:H^\sigma(\R) \to H^\sigma(\R),\;f \mapsto \varphi_x \cdot f,\quad \min\{s-1,0\} \leq \sigma \leq s,
\]
is a continuous linear map. From \cite{composition} we know that division by $\varphi_x$ 
\[
	M_{\varphi_x}^{-1}:H^\sigma(\R) \to H^\sigma(\R),\; f \mapsto \frac{f}{\varphi_x},\quad \min\{s-1,0\} \leq \sigma \leq s,
\]
is continuous as well. The map
\[
	\Ds^{s+1}(\R) \to L(H^\sigma(\R);H^\sigma(\R)),\;\varphi \mapsto M_{\varphi_x},\quad \min\{s-1,0\} \leq \sigma \leq s,
\]
is affine linear, hence it is analytic -- see \cite{lagrangian} for basic definitions and results on analyticity in Banach spaces. Here $L(X;Y)$ is the space of continuous linear maps from $X$ to $Y$. Using Neumann series we see that inversion of isomorphisms is an analytic process. So
\[
	\Ds^{s+1}(\R) \to L(H^\sigma(\R);H^\sigma(\R)),\;\varphi \mapsto M_{\varphi_x}^{-1},\quad \min\{s-1,0\} \leq \sigma \leq s,
\]
is analytic as well. In particular
\begin{align}\label{det}
	\Ds^{s+1}(\R) \times (1+H^s(\R)) \to 1+H^s(\R),\; (\varphi,f) \mapsto M_{\varphi_x}^{-1} f = \frac{f}{\varphi_x},
\end{align}
is analytic.\\
Finally consider 
\[
	[(f \circ \varphi^{-1})_x] \circ \varphi= \frac{f_x}{\varphi_x}.
\]
In other words we have $R_\varphi \partial_x R_\varphi^{-1}=M_{\varphi_x}^{-1} \partial_x$. Thus
\begin{align}\label{conj}
	\Ds^{s+1}(\R) \to L(H^\sigma(\R);H^{\sigma-1}(\R)),\;\varphi \mapsto R_\varphi \partial_x R_\varphi^{-1},\quad \sigma=s,s+1,
\end{align}
is analytic.\\
Suppose now that for some $T > 0$ we have $u \in C([0,T];H^{s+1}(\R))$. In \cite{lagrangian} it was shown that there is a unique $\varphi \in C^1([0,T];\Ds^{s+1}(\R))$ satisfying
\[
	\varphi_t(t)=u(t) \circ \varphi(t),\;0 \leq t \leq T,\;\varphi(0)=\text{id}.
\]
Thus $\Ds^{s+1}(\R)$ is the right functional space for the Lagrangian variable $\varphi$.\\
For the statement of the main result of this section we introduce for $s > 1/2$
\[
	U^s=\{h \in 1+H^s(\R) \;|\; \inf_{x \in \R} h(x) > 0\}.
\]
Note that we can identify $U^s$ with the open subset $U^s-1 \subset H^s(\R)$.

\begin{Lemma}\label{lemma_analytic}
	Let $s > 1/2$. Then
	\[
		\Ds^{s+1}(\R) \times H^{s+1}(\R) \times U^s \times H^{s+3}(\R) \to H^{s+1}(\R),\;(\varphi,v,h_0,\xi) \mapsto F(\varphi,v,h_0,\xi), 
	\]
is $C^1$. Here $F$ is the map from \eqref{F}.
\end{Lemma}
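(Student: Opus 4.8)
The plan is to write $F$ in a ``conjugated'' form that isolates all the $\varphi$-dependence into maps already known to be analytic, thereby avoiding having to differentiate compositions $f \circ \varphi$ at top regularity (where \eqref{C1} yields a $C^1$ map only after the loss of one derivative). Set $\tilde h := h_0/\varphi_x$, so that the Eulerian depth is $h = \tilde h \circ \varphi^{-1} = R_{\varphi^{-1}} \tilde h$ and the Eulerian velocity is $u = v \circ \varphi^{-1} = R_{\varphi^{-1}} v$. Using $R_\varphi A_{h,\xi}^{-1} = (R_\varphi A_{h,\xi} R_{\varphi^{-1}})^{-1} R_\varphi$, the definition \eqref{F} of $F$ becomes
\[
	F(\varphi,v,h_0,\xi) = -\,\widetilde A^{-1}\,\widetilde P, \qquad \widetilde A := R_\varphi A_{h,\xi} R_{\varphi^{-1}}, \quad \widetilde P := R_\varphi\,P(h,u,\xi).
\]
Conjugating each $\partial_x$ by $R_\varphi$ replaces it with $M_{\varphi_x}^{-1} \partial_x$ (identity \eqref{conj}), each derivative of $h$ by the corresponding $M_{\varphi_x}^{-1}\partial_x$-derivative of $\tilde h$, each derivative of $u$ by the corresponding $M_{\varphi_x}^{-1}\partial_x$-derivative of $v$, and each $\xi^{(k)}$ by $\xi^{(k)} \circ \varphi = R_\varphi \xi^{(k)}$. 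In this way $\widetilde A$ and $\widetilde P$ become polynomial expressions in $\tilde h$, $v$, $\xi \circ \varphi$, $\xi_x \circ \varphi$, $\xi_{xx}\circ\varphi$ and in the operators $M_{\varphi_x}^{-1}$, $\partial_x$, with no occurrence of $\varphi^{-1}$ and no composition ever applied to $v$.

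I would then verify that every building block is $C^1$. The map $(\varphi,h_0) \mapsto \tilde h$ is analytic into $U^s$ by \eqref{det}, and $\varphi \mapsto M_{\varphi_x}^{-1}\partial_x \in L(H^\sigma;H^{\sigma-1})$ is analytic by \eqref{conj}. Since $\xi \in H^{s+3}$ we have $\xi_x \in H^{s+2}$ and $\xi_{xx} \in H^{s+1}$, so the compositions $\xi_x \circ \varphi$, $\xi_{xx}\circ\varphi$ are $C^1$ in $(\xi,\varphi)$ into $H^{s+1}$ and $H^s$ respectively, by \eqref{C1} together with the linearity of $R_\varphi$ in its first argument; here the extra regularity of $\xi$ supplies the derivative that \eqref{C1} costs. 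Because $H^s(\R)$ is an algebra for $s>1/2$ and multiplication by an $H^s$-function is a bounded operator depending linearly, hence smoothly, on that function, the coefficients of $\widetilde A$ and the summands of $\widetilde P$ are $C^1$ in $(\varphi,v,h_0,\xi)$. Tracking Sobolev indices along the top-order term $M_{\varphi_x}^{-1}\partial_x[\tfrac13 \tilde h^3 M_{\varphi_x}^{-1}\partial_x(\cdot)]$ gives $\widetilde A \in L(H^{s+1};H^{s-1})$, and along the worst source term $M_{\varphi_x}^{-1}\partial_x[\tfrac23 \tilde h^3 (M_{\varphi_x}^{-1}\partial_x v)^2]$, whose inner square lies in $H^s$, gives $\widetilde P \in H^{s-1}$. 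Consequently $(\varphi,h_0,\xi) \mapsto \widetilde A$ is $C^1$ into $L(H^{s+1};H^{s-1})$ and $(\varphi,v,h_0,\xi) \mapsto \widetilde P$ is $C^1$ into $H^{s-1}$.

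Next I would pass to the inverse. Since $A_{h,\xi}:H^{s+1}\to H^{s-1}$ is an isomorphism by the lemma of Section~\ref{section_ah} and $R_\varphi$, $R_{\varphi^{-1}}$ are isomorphisms on the relevant $H^\sigma$ (for $1/2<s<1$ on $H^{s-1}$ by the extension of right composition to negative indices recalled above), the operator $\widetilde A$ is an isomorphism $H^{s+1}\to H^{s-1}$. Inversion of isomorphisms is analytic via the Neumann series, so $(\varphi,h_0,\xi)\mapsto \widetilde A^{-1} \in L(H^{s-1};H^{s+1})$ is $C^1$. Finally, the evaluation map $L(H^{s-1};H^{s+1}) \times H^{s-1} \to H^{s+1}$, $(T,g)\mapsto Tg$, is bounded bilinear, hence smooth, and composing it with the two $C^1$ maps above shows, by the chain rule, that $F = -\widetilde A^{-1}\widetilde P$ is $C^1$ into $H^{s+1}$, as claimed.

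I expect the main obstacle to be the regularity bookkeeping of the second paragraph: one must choose the conjugated representation so that $v$ (the only factor genuinely varying at top order) is never composed with $\varphi$, and so that the $\varphi$-dependence enters only through the analytic maps $M_{\varphi_x}^{-1}$ and $h_0/\varphi_x$ of \eqref{det}--\eqref{conj} and through the compositions $\xi^{(k)}\circ\varphi$, which are $C^1$ precisely because $\xi \in H^{s+3}$ carries three spare derivatives. Verifying that each product indeed lands in the algebra $H^s$, so that the outer conjugated derivative produces an element of $H^{s-1}$ and $\widetilde A^{-1}$ then returns to $H^{s+1}$, is exactly where the hypotheses $s>1/2$, $\inf_{x\in\R} h_0(x)>0$ and $\xi \in H^{s+3}$ are all used.
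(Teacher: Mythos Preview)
Your proposal is correct and follows essentially the same approach as the paper: conjugate $A_{h,\xi}$ and $P$ by $R_\varphi$, express the result through the analytic building blocks $M_{\varphi_x}^{-1}h_0$ and $R_\varphi\partial_x R_\varphi^{-1}=M_{\varphi_x}^{-1}\partial_x$ together with the $C^1$ compositions $R_\varphi\xi_x$, $R_\varphi\xi_{xx}$, and then invoke analyticity of operator inversion. Your write-up is in fact slightly more explicit than the paper's in tracking the Sobolev indices and in spelling out why $\widetilde A$ is an isomorphism (via the invertibility of $A_{h,\xi}$ from Section~\ref{section_ah} combined with $R_\varphi$ being an isomorphism on $H^{s-1}$, including the negative-index case for $1/2<s<1$).
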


\begin{proof}
We write
	\[
		F(\varphi,v,h_0,\xi)=-R_\varphi A_{(h_0/\varphi_x)\circ \varphi^{-1},\xi}^{-1} R_\varphi^{-1} R_\varphi P((h_0/\varphi_x) \circ \varphi^{-1},v \circ \varphi^{-1},\xi),
	\]
with $P$ from \eqref{nonlocal}. We will first show that
	\begin{align}
		\label{inv}
		\begin{split}
		&\Ds^{s+1}(\R) \times U^s \times H^{s+3}(\R) \to L(H^{s-1}(\R);H^{s+1}(\R)),\\
		&(\varphi,h_0) \mapsto R_\varphi A_{(h_0/\varphi_x)\circ \varphi^{-1},\xi}^{-1} R_\varphi^{-1},
		\end{split}
	\end{align}
is $C^1$. Note that
	\[
		R_\varphi A_{(h_0/\varphi_x)\circ \varphi^{-1},\xi}^{-1} R_\varphi^{-1}=\left(R_\varphi A_{(h_0/\varphi_x)\circ \varphi^{-1}} R_\varphi^{-1}\right)^{-1}.
		\]
Let $f \in H^{s+1}(\R)$. We then have
	\begin{align*}
		&R_\varphi A_{(h_0/\varphi_x)\circ \varphi^{-1},\xi} R_\varphi^{-1} f =\\ 
		&M_{\varphi_x}^{-1} h_0 \cdot (1+(R_\varphi \xi_x)^2) \cdot f + R_\varphi \partial_x R_\varphi^{-1} [\frac{1}{2} (M_{\varphi_x}^{-1} h_0)^2 \cdot (R_\varphi \xi_x) \cdot f]\\
		&-\frac{1}{2} (M_{\varphi_x}^{-1} h_0)^2 \cdot R_\varphi \xi_x \cdot R_\varphi \partial_x R_\varphi^{-1} f - R_\varphi \partial_x R_\varphi^{-1} [\frac{1}{3} (M_{\varphi_x}^{-1}h_0)^3 \cdot R_\varphi \partial_x R_\varphi^{-1} f].
	\end{align*}
From \eqref{det} resp. \eqref{conj} we know that $M_{\varphi_x}h_0$ resp. $R_\varphi \partial_x R_\varphi^{-1}$ depend analytically on $\varphi$ and $h_0$. From \eqref{C1} we know that $R_\varphi \xi_x$ depends in a $C^1$ fashion on $\varphi$ and $\xi$. Thus
	\begin{align*}
		&\Ds^{s+1}(\R) \times U^s \times H^{s+3}(\R) \to L(H^{s+1}(\R);H^{s-1}(\R)),\\
		&(\varphi,h_0) \mapsto R_\varphi A_{(h_0/\varphi_x)\circ \varphi^{-1},\xi} R_\varphi^{-1},
	\end{align*}
	is $C^1$. By using Neumann series we know that inversion of isomorphisms is a smooth process. Hence we get that the map in \eqref{inv} is $C^1$. Next consider
\begin{align*}
	&R_\varphi P((h_0/\varphi_x) \circ \varphi^{-1},v \circ \varphi^{-1},\xi)=\\
	&M_{\varphi_x}^{-1} h_0 \cdot R_\varphi \partial_x R_\varphi^{-1} M_{\varphi_x}^{-1}h_0 \cdot g + R_\varphi \partial_x R_\varphi^{-1} [\frac{1}{2} (M_{\varphi_x}^{-1} h_0)^2 \cdot v^2 \cdot R_\varphi \xi_{xx}]\\
	&+R_\varphi \partial_x R_\varphi^{-1} [\frac{2}{3} (M_{\varphi_x}^{-1} h_0)^3 (R_\varphi \partial_x R_\varphi^{-1} v)^2] + M_{\varphi_x}h_0 \cdot R_\varphi \xi_x \cdot g \\
	&+M_{\varphi_x}^{-1} h_0 \cdot v^2 \cdot R_\varphi \xi_x \cdot R_\varphi \xi_{xx} + (M_{\varphi_x}^{-1}h_0)^2 \cdot R_\varphi \xi_x \cdot (R_\varphi \partial_x R_\varphi^{-1}v)^2.
\end{align*}
	As above we have that $M_{\varphi_x}^{-1} h_0$ depends analytically on $(\varphi,h_0)$, $R_\varphi \partial_x R_\varphi^{-1}$ depends analytically on $\varphi$ and $R_\varphi \xi_x$ resp. $R_\varphi \xi_{xx}$ depend in a $C^1$ way on $(\varphi,\xi)$. Altogether we see that the map in the statement of Lemma \ref{lemma_analytic} is $C^1$. This finishes the proof.
\end{proof}

Consider now for $(h_0,u_0,\xi) \in U^s \times H^{s+1}(\R) \times H^{s+3}(\R)$ the second order ODE on $\Ds^{s+1}(\R)$
\begin{align}\label{ode}
	\varphi_{tt}=F(\varphi,\varphi_t,h_0,\xi),\;\varphi(0)=\text{id},\;\varphi_t(0)=u_0.
\end{align}
By Lemma \ref{lemma_analytic} this is a $C^1$ ODE. Hence we can find solutions by applying the Picard-Lindel\"of Theorem. The equation \eqref{ode} is a Lagrangian formulation of \eqref{gn}.

\section{Local Well-Posedness of the Green-Naghdi system}\label{section_lwp}

The goal of this section is to prove Theorem \ref{th_lwp}.

\begin{proof}[Proof of Theorem \ref{th_lwp}]
	Let $s > 1/2$ and $(h_0,u_0,\xi) \in U^s \times H^{s+1}(\R) \times H^{s+3}(\R)$. First we show the existence of a solution. By the Picard-Lindel\"of Theorem there is some $T > 0$ and a solution $\varphi \in C^2([0,T];\Ds^{s+1}(\R))$ to \eqref{ode}. We define 
\[
	h(t):=\left(\frac{h_0}{\varphi_x(t)}\right) \circ \varphi(t)^{-1},\;u(t)=\varphi_t(t) \circ \varphi(t)^{-1},\quad 0 \leq t \leq T.
\]
	By the continuity properties of the composition \eqref{composition} and inversion \eqref{inversion} we get
\[
	(h,u) \in C([0,T];U^s \times H^{s+1}(\R)).
\]
	We also have by a general argument (see \cite{gn_lagrangian}) 
	\[
		(h,u) \in C^1([0,T];(1+H^{s-1}(\R)) \times H^s(\R)).
	\]
	By the Sobolev imbedding we get $u \in C^1([0,T] \times \R)$. Taking pointwise the $t$ derivative gives
\[
	\frac{d}{dt} u \circ \varphi = (u_t+uu_x) \circ \varphi = \varphi_{tt}=-\left(A_{h,\xi}^{-1} P(h,u,\xi)\right) \circ \varphi.
\]
Thus pointwise 
	\[
		u_t + uu_x = -A_{h,\xi}^{-1} P (h,u,\xi).
	\]
	But this is an identity in $H^s(\R)$ as well. So $(h,u)$ solves the second equation in \eqref{gn}. By the very definition of $h$ we see that $(h,u)$ solves the first equation in \eqref{gn}. So the existence of solutions is established.\\
	In Lagrangian variables we get continuous dependence on $(h_0,u_0,\xi)$ by ODE theory. The continuity properties of the composition \eqref{composition} and inversion \eqref{inversion} now provide continuous dependence of $(h,u)$ on $(h_0,u_0,\xi)$. This proves continuous dependence on the data.\\
	To prove uniqueness we assume that we have two solutions
\[
	(h,u),(\tilde h,\tilde u) \in C([0,T];U^s \times H^{s+1}(\R)) \times C^1([0,T];(1+H^{s-1}(\R)) \times H^s(\R))
\]
	satisfying \eqref{gn}. We know that $u$ resp. $\tilde u$ generate flow maps $\varphi$ resp. $\tilde \varphi$ with
\[
	\varphi,\;\tilde \varphi \in C^1([0,T];\Ds^{s+1}(\R)). 
\]
	As a consequence of $h_t+(hu)=0$ we get
	\[
		h=\left(\frac{h_0}{\varphi_x}\right) \circ \varphi^{-1}.
		\]
Taking pointwise the $t$ derivative in the equation $\varphi_t = u \circ \varphi$ we get
	\[
		\varphi_{tt}=(u_t+uu_x) \circ \varphi.
	\]
Thus we get pointwise
	\[
		\varphi_{tt}=-R_\varphi A_{h,\xi}^{-1} P(h,u,\xi)=F(\varphi,\varphi_t,h_0,\xi),
\]
	where we used $h=(h_0/\varphi_x) \circ \varphi^{-1}$. This is an identity in $H^{s+1}(\R)$ as well. So $\varphi$ solves the ODE \eqref{ode}. Similarly we get that $\tilde \varphi$ solves \eqref{ode} with the same initial data. By the Uniqueness Theorem for ODEs we get $\varphi \equiv \tilde \varphi$ on $[0,T]$ and with that $(h,u)\equiv(\tilde h,\tilde u)$ on $[0,T]$.\\
Putting existence, continuous dependence on the data and uniqueness together we get the local well-posedness of \eqref{gn}. This finishes the proof. 
\end{proof}

\bibliographystyle{plain}

\flushleft
\author{ Hasan \.{I}nci\\
Department of Mathematics, Ko\c{c} University\\
Rumelifeneri Yolu\\
34450 Sar{\i}yer \.{I}stanbul T\"urkiye\\
        {\it email: } {hinci@ku.edu.tr}
}

\end{document}